\newtheorem{theorem}{Theorem}[section]
\newtheorem{lemma}[theorem]{Lemma}
\newtheorem{conj}[theorem]{Conjecture}
\theoremstyle{definition}
\newtheorem{remark}[theorem]{Remark}
\newtheorem*{lemma*}{Lemma}
\newcommand{\C}{\mathscr{C}}
\newcommand{\Chat}{\widehat{\mathscr{C}}}
\title{Centres of blocks of finite groups with trivial intersection Sylow $p$-subgroups}
\author{Inga Schwabrow}
\begin{document}
\date{\today}
\maketitle

\vspace{-10mm}
\begin{center}

\small

\textit{FB Mathematik, TU Kaiserslautern, Postfach 3049, 67653 Kaiserslautern, Germany}

\textit{E-mail: inga.schwabrow@gmx.de}
\end{center}

\normalsize

\begin{abstract}\noindent
For finite groups $G$ with non-abelian, trivial intersection Sylow $p$-subgroups, the analysis of the Loewy structure of the centre of a block allows us to deduce that a stable equivalence of Morita type does not induce an algebra isomorphism between the centre of the principal block of $G$ and the centre of the Brauer correspondent. This was already known for the Suzuki groups; the result will be generalised to cover more groups with trivial intersection Sylow $p$-subgroups.

\end{abstract}

\textbf{Keywords}: blocks, trivial intersection defect groups, stable equivalence of Morita type

\textbf{AMS classification}: 20C05, 20C15

\section{Introduction}
Let $p$ be a fixed prime number. Let $G$ be a finite group whose order is divisible by $p$, and $P$ a Sylow $p$-subgroup of $G$.  
Throughout, $(K,\mathcal{O},k)$ is a $p$-modular system; in particular, $k$ is an algebraically closed field of characteristic $p$ and  $\mathcal{O}$ is a complete valuation ring whose residue field $k$ has characteristic $p$.

Local representation theory studies the connection between $kG$ and $kN_G(P)$;  Brou\'{e}'s abelian defect group conjecture (ADGC) predicts the existence of a derived equivalence between the principal blocks $B_0(kG)$ and $b_0(kN_G(P))$   if the Sylow $p$-subgroups of $G$ are abelian. We investigate the question of whether a derived equivalence can exist when the  Sylow $p$-subgroups are non-abelian, trivial intersection subgroups of $G$.

A group is said to have trivial intersection (TI) Sylow $p$-subgroups, if any two distinct Sylow $p$-subgroups intersect trivially. If a block $B$ of $G$ has TI defect groups $D$, then there exists a stable equivalence of Morita type between $B$ and its Brauer correspondent $b$ in $N_G(P)$ \cite[Section 11.2]{Konig}. This equivalence induces an algebra homomorphism between the respective stable centres.
Although $Z(B)$ and $Z(b)$ have the same dimension \cite[Theorem 9.2]{Blau}, and the stable centres are isomorphic \cite[Proposition 5.4]{BroueEquivalence}, surprisingly, there is in general no isomorphism between the two centres of $B$ and $b$.

The abelian defect group conjecture proposed by Brou\'{e} in $1990$ \cite[Question 6.2]{Broue1} provides a structural explanation for the close relationship between certain blocks. The conjecture claims that there is a derived equivalence between a block $B$  with abelian defect groups and its Brauer correspondent $b$ in $N_G(P)$.
An important consequence of  Brou\'{e}'s conjecture is that if a block has abelian defect groups then the derived equivalence  induces an isomorphism between the centres of the blocks $B$ and $b$ \cite[Theorem 1.5]{Broue1}.
In this article we will show the non-existence of such an isomorphism under the different assumption that the defect groups have the trivial intersection property, and are not abelian or of the form $p_{-}^{1+2}$.

\section{Preliminary}
Let $G$ be a finite group with trivial intersection Sylow $p$-subgroups $P$. Since we are interested in comparing the principal block algebras of $G$ and $N_G(P)$, we may assume that $P$ is not normal in $G$. Since $O_p(G)$ is the intersection of all Sylow $p$-subgroups of $G$, we have $O_p(G)=1$. The following remark allows us to further assume that $O_{p'}(G)=1$.

\begin{remark}\label{rema:Reduction} Let $G$ be a finite group and $H$  a normal $p'$-subgroup of $G$. By the Fong-Reynolds reduction (\cite{FongCharacters}, \cite{ReynoldsNormal}), we have  $B_0(G)$ is Morita equivalent to $B_0(G/H)$, denoted $B_0(G)\sim_{M}B_0(G/H)$.
Hence 
\[\begin{array}{rcll}
B_0(N_G(P))&\sim_{M} & B_0\left(    \frac{N_G(P)}{N_G(P)\cap O_{p'}(G)}\right)& \text{ by F-R reduction}\\[10pt]
&\cong &  B_0\left(    \frac{N_G(P)O_{p'}(G) }{O_{p'}(G)  }\right)& \text{ by  2nd Isomorphism Theorem}\\[10pt]
&=& B_0(N_{\overline{G}}(\overline{P}))& \text{ by \cite[Result 3.2.8]{Kurzweil}},
\end{array} \]
and  $B_0(G)\sim_{M} B_0(G/O_{p'}(G))$. Since Morita equivalent blocks have isomorphic centres \cite[Corollary 3.5]{BroueEquivalence}, $Z(B_0(\overline{G}))$ is  isomorphic to $Z(B_0(N_{\overline{G}}(\overline{P}))$ if and only if  $Z(B_0(G))$ is isomorphic to $Z(B_0(N_G(P)))$.
\end{remark}

\vspace{4mm}\noindent
The $p$-local rank, $plr(G)$, of $G$ is defined to be the length of a longest chain  in the set of radical $p$-chains of $G$ (for details see \cite{EatonPlocalRank} or \cite{RobinsonLocalStruc}); $plr(G)=0$ if and only if $G$ has a normal Sylow $p$-subgroup.   Moreover, if $plr(G)>0$, then $plr(G)=1$ if and only if $G/O_p(G)$ has TI Sylow $p$-subgroups \cite[Lemma 7.1]{RobinsonLocalStruc}. Hence if $G$ has TI Sylow $p$-subgroups, then $plr(G)=1$. In particular, the following lemma can be applied to $G$.

\begin{lemma}\label{lm:EatonReduction}\cite[Lemma 2.4]{EatonPlocalRank} Let $G$ be a finite group with $plr(G)=1$ and $O_p(G)=O_{p'}(G)=1$. Then there is a unique non-trivial minimal normal subgroup $S$ of $G$. Furthermore, $S$ is non-abelian simple, $plr(S)=1$ and $G$ is isomorphic to a subgroup of the automorphism group of $S$.
\end{lemma}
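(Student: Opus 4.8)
The plan is to first translate the hypothesis $plr(G)=1$ into the statement that $G$ has TI Sylow $p$-subgroups, and then exploit the basic incompatibility between the TI condition and non-trivial direct product decompositions. Since $plr(G)=1>0$ and $O_p(G)=1$, the criterion quoted above gives that $G=G/O_p(G)$ has TI Sylow $p$-subgroups. I would record two easy consequences of $O_p(G)=O_{p'}(G)=1$: first, every minimal normal subgroup $N$ of $G$ is non-abelian, since a minimal normal abelian subgroup is elementary abelian of prime-power order and would therefore lie in $O_p(G)$ or in $O_{p'}(G)$, both trivial; hence $N=T_1\times\cdots\times T_r$ with the $T_i$ isomorphic non-abelian simple groups. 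Second, $p\mid|N|$, as otherwise $N$ would be a normal $p'$-subgroup, forcing $N\le O_{p'}(G)=1$. I would also check that the TI property passes to any normal subgroup $N\trianglelefteq G$: if $S_1\ne S_2$ are Sylow $p$-subgroups of $N$, write $S_i=P_i\cap N$ for Sylow $p$-subgroups $P_i$ of $G$; then $P_1\ne P_2$, so $P_1\cap P_2=1$ and hence $S_1\cap S_2=1$.

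The heart of the argument is that a normal subgroup with TI Sylow $p$-subgroups cannot contain a direct product of two non-trivial $p$-parts coming from distinct factors. Concretely, if $A\trianglelefteq G$ decomposes as $A=B\times C$ with $B$ and $C$ both of order divisible by $p$ and neither having a normal Sylow $p$-subgroup, then fixing a Sylow $p$-subgroup $Q_B$ of $B$ and choosing two distinct Sylow $p$-subgroups $Q_C\ne Q_C'$ of $C$ produces distinct Sylow $p$-subgroups $Q_B\times Q_C$ and $Q_B\times Q_C'$ of $A$ whose intersection contains $Q_B\times 1\ne 1$, contradicting TI. I would apply this twice. If $G$ had two distinct minimal normal subgroups $N_1,N_2$, then $N_1\cap N_2=1$ forces $N_1N_2=N_1\times N_2\trianglelefteq G$ with both factors of order divisible by $p$ and non-abelian, hence without normal Sylow subgroups; the resulting contradiction yields uniqueness of the minimal normal subgroup $S$. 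Grouping the decomposition $S=T_1\times\cdots\times T_r$ as $S=T_1\times(T_2\times\cdots\times T_r)$ and running the same argument rules out $r\ge2$, so $r=1$ and $S$ is non-abelian simple.

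It remains to show $plr(S)=1$ and the embedding. As a normal subgroup, $S$ inherits TI Sylow $p$-subgroups; moreover $O_p(S)$ is characteristic in $S$, hence normal in $G$, hence trivial, and since $S$ is non-abelian simple with $p\mid|S|$ its Sylow $p$-subgroups are proper, non-trivial and non-normal, so $plr(S)>0$. Applying the quoted criterion to $S$ then gives $plr(S)=1$. Finally $C_G(S)\trianglelefteq G$ and $C_G(S)\cap S=Z(S)=1$; were $C_G(S)$ non-trivial it would contain a minimal normal subgroup, necessarily $S$ by uniqueness, forcing $S\le C_G(S)$ and contradicting $C_G(S)\cap S=1$. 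Hence $C_G(S)=1$, so conjugation embeds $G$ faithfully into $\mathrm{Aut}(S)$. The only step needing genuine care is the inheritance of the TI property across the various direct-product decompositions together with the verification that each relevant factor really does lack a normal Sylow $p$-subgroup; once the equivalence between $plr=1$ and the TI condition is in place, the rest is essentially formal.
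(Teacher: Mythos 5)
The paper does not prove this lemma at all --- it is quoted verbatim from Eaton's paper (his Lemma 2.4) --- so there is no in-paper argument to compare against; your proposal must stand on its own, and it does. Your derivation uses exactly the one fact about $p$-local rank that the paper records, namely that for $plr(G)>0$ one has $plr(G)=1$ if and only if $G/O_p(G)$ has TI Sylow $p$-subgroups, and from there everything is elementary and correct: minimal normal subgroups are non-abelian (else they would lie in $O_p(G)$ or $O_{p'}(G)$) and have order divisible by $p$; the TI property descends to normal subgroups because every Sylow $p$-subgroup of $N\trianglelefteq G$ has the form $P\cap N$; and the direct-product obstruction (the Sylow subgroups $Q_B\times Q_C$ and $Q_B\times Q_C'$ of $B\times C$ are distinct yet meet in $Q_B\times 1\neq 1$) correctly rules out both a second minimal normal subgroup and a decomposition of $S$ into two or more simple factors. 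The final steps, $plr(S)=1$ via $O_p(S)=1$ and non-normal Sylow subgroups, and the embedding $G\hookrightarrow\mathrm{Aut}(S)$ via $C_G(S)\trianglelefteq G$, $C_G(S)\cap S=Z(S)=1$ and uniqueness of $S$, are standard and correct. One sentence is looser than it should be: ``of order divisible by $p$ and non-abelian, hence without normal Sylow subgroups'' is not a valid implication in general ($S_3$ with $p=3$ is a counterexample); what you actually need, and what your earlier analysis already supplies, is that each factor in question is a direct product of non-abelian simple groups each of order divisible by $p$, and such a group has no normal Sylow $p$-subgroup by simplicity of the factors. With that wording tightened, the proof is complete.
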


\begin{lemma}\label{lm:ClassificationTI}\cite[Lemma 3.2]{EatonPlocalRank} Let $p$ be a prime and $S$ be a non-abelian, simple group with $plr(S)=1.$ Then $(p,S)$ is one of the following:
\begin{description}
  \item[(a)] $(2, {^2B_2}(2^{2m+1}))$, $m\geq1$;
  \item[(b)] $(3, {^2G_2}(3^{2m+1}))$, $(3,PSL(3,4))$, $(3, {^2G_2}(3)')$, $(3,M_{11})$,  $m\geq1$;
  \item[(c)] $(5, {^2B_2}(2^5))$, $(5, {^2F_4}(2)')$, $(5,McL)$;
  \item[(d)] $(11,J_4)$;
  \item[(e)] $(p,PSL(2,p^m))$, $(p,PSU(3,p^m))$, $m\geq1$.
\end{description}
\end{lemma}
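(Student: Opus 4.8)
The statement is a classification result, so the plan is to reduce it to a known group-theoretic classification that rests on the Classification of Finite Simple Groups (CFSG), and then to refine the resulting candidate list. The three ingredients I would use are: (i) reinterpreting $plr(S)=1$ as a trivial-intersection condition on a Sylow $p$-subgroup; (ii) converting that condition into the existence of a strongly $p$-embedded subgroup; and (iii) the CFSG-based classification of the simple groups that admit such a subgroup.

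First I would translate the hypothesis. Since $S$ is non-abelian simple and $p\mid|S|$, a Sylow $p$-subgroup $P$ is non-trivial and non-normal, and $O_p(S)=O_{p'}(S)=1$; by the criterion of Robinson quoted above, $plr(S)=1$ is then equivalent to $S$ having trivial-intersection Sylow $p$-subgroups. Next I would check that $M:=N_S(P)$ is strongly $p$-embedded in $S$. The argument is short: because $P$ is a normal Sylow $p$-subgroup of $M$, every $p$-element of $M$ lies in $P$, so any non-trivial $p$-element of $M\cap M^g$ for $g\in S\setminus M$ lies in $P\cap P^g$ and, by the TI property, forces $P=P^g$, i.e. $g\in M$, a contradiction. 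Hence $M\cap M^g$ is a $p'$-group for every $g\notin M$, which is precisely the strongly $p$-embedded condition.

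With this set-up, the core of the argument is to feed $S$ into the classification of finite simple groups possessing a strongly $p$-embedded subgroup, assembled from CFSG through the work of Bender (for $p=2$), Aschbacher, and Gorenstein--Lyons--Solomon. This produces a finite list of candidate pairs $(p,S)$: the twisted and untwisted rank-one groups of Lie type in defining characteristic (including the Suzuki family ${^2B_2}(2^{2m+1})$ and the Ree family ${^2G_2}(3^{2m+1})$), together with a controlled set of cross-characteristic and sporadic examples. For each candidate I would then decide, from the explicit Sylow structure, whether the Sylow $p$-subgroup is genuinely trivial-intersection---a strictly stronger demand than merely admitting a strongly $p$-embedded subgroup---and discard the cases where it fails. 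In the defining-characteristic families $PSL(2,p^m)$ and $PSU(3,p^m)$ the Sylow subgroup is the unipotent radical of a Borel subgroup, and its TI property follows at once from the Bruhat decomposition; for the exceptional entries---$PSL(3,4)$ and $M_{11}$ at $p=3$, ${^2F_4}(2)'$ and $McL$ at $p=5$, and $J_4$ at $p=11$, together with small identifications such as ${^2G_2}(3)'\cong PSL(2,8)$---one verifies the TI property directly from the known maximal-subgroup and conjugacy-class data.

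The main obstacle is not any single reduction but the breadth of this case analysis together with its complete dependence on CFSG: the genuine work lies in separating, among all simple groups admitting a strongly $p$-embedded subgroup, exactly those whose Sylow $p$-subgroups are trivial-intersection, and in pinning down the precise small and sporadic exceptions (and the relevant exceptional isomorphisms). I therefore expect step (iii) and the subsequent TI-refinement to carry essentially all the difficulty, with steps (i) and (ii) serving only to set up the reduction.
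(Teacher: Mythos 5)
The paper offers no proof of this lemma at all: it is quoted verbatim from Eaton \cite[Lemma 3.2]{EatonPlocalRank}, and the argument there (via Robinson's criterion that $plr(S)=1$ means TI Sylow $p$-subgroups, the resulting strongly $p$-embedded subgroup $N_S(P)$, and the CFSG-based classification of simple groups with a strongly $p$-embedded subgroup, refined by checking the TI property case by case) is exactly the route you describe. Your proposal is correct and takes essentially the same approach as the cited source.
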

The following lemma appears in \cite{EatonPlocalRank}. However the proof presented there is incomplete as the group $PSL(3,4)$ was not covered; we include a short proof here for completeness of the statement.

\begin{lemma}\label{lm:gcd1} Let $G$ and $S$ be as in Lemma \ref{lm:EatonReduction}. Then gcd$(p,[G:S])=1$ except when $(p,S)=(3, {^2G_2}(3)')$ or $(5, {^2B_2}(2^5))$.
\end{lemma}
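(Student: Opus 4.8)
My plan is to exploit that $S$ is the unique minimal normal subgroup of $G$ (Lemma~\ref{lm:EatonReduction}), so $C_G(S)=1$ and $G$ embeds into $\mathrm{Aut}(S)$ with $S=\mathrm{Inn}(S)$; consequently $[G:S]$ divides $|\mathrm{Out}(S)|$. Thus $p\mid[G:S]$ can only occur when $p\mid|\mathrm{Out}(S)|$, and in that case a Sylow $p$-subgroup of $G$ contains an element $\alpha$ whose image in $\mathrm{Out}(S)$ has order $p$. First I would tabulate $|\mathrm{Out}(S)|$ over the pairs in Lemma~\ref{lm:ClassificationTI}. For $S={}^2B_2(2^{2m+1})$ (with $p=2$), $M_{11}$, $J_4$, $McL$ and ${}^2F_4(2)'$ one has $p\nmid|\mathrm{Out}(S)|$, so there is nothing to prove; the pairs that remain are ${}^2G_2(3^{2m+1})$, $PSL(3,4)$, $PSL(2,p^m)$ and $PSU(3,p^m)$ (for suitable $m$), together with the two candidate exceptions ${}^2G_2(3)'$ and ${}^2B_2(2^5)$.

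The key tool I would isolate is the following: \emph{if $\alpha\in\mathrm{Aut}(S)$ has order $p$ and $C_S(\alpha)$ has more than one Sylow $p$-subgroup, then no group $G$ with $S\trianglelefteq G$ and $\alpha\in G$ has TI Sylow $p$-subgroups.} Since $plr(G)=1$ forces TI Sylow $p$-subgroups (because $O_p(G)=1$), this rules out $\alpha\in G$ and hence $p\mid[G:S]$. To prove the tool I would pick two distinct Sylow $p$-subgroups $R_1,R_2$ of $C_S(\alpha)$; as $S$ has TI Sylow $p$-subgroups, $R_1,R_2$ lie in unique Sylow $p$-subgroups $U,U'$ of $S$, and $U\neq U'$ (otherwise $\langle R_1,R_2\rangle$ would be a $p$-group). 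Since $\alpha$ centralizes $R_1$ it fixes a nontrivial $p$-element of $U$, whence by uniqueness $\alpha$ normalizes $U$; likewise it normalizes $U'$. Then $U\langle\alpha\rangle$ and $U'\langle\alpha\rangle$ are $p$-subgroups lying in Sylow $p$-subgroups $P_1,P_2$ of $G$; as each $P_i\cap S$ is a $p$-group we cannot have $P_1=P_2$ (that would force $\langle U,U'\rangle$ into a $p$-group), so $\alpha\in P_1\cap P_2$ exhibits a nontrivial intersection of distinct Sylow $p$-subgroups.

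It then remains to compute $C_S(\alpha)$ case by case. For the defining-characteristic pairs $PSL(2,p^m)$ and $PSU(3,p^m)$ (when $p\mid m$, resp.\ $p\mid 2m$) and for ${}^2G_2(3^{2m+1})$ (when $3\mid 2m+1$), the relevant $\alpha$ is a field automorphism of order $p$ and $C_S(\alpha)$ is a group of the same Lie type over a proper subfield, which has more than one Sylow $p$-subgroup; the tool then applies and gives $p\nmid[G:S]$. For $PSL(3,4)$ with $p=3$ --- the case omitted in \cite{EatonPlocalRank} --- the outer automorphism of order $3$ is diagonal, realised inside $PGL(3,4)$, and $C_{PSL(3,4)}(\alpha)$ contains a copy of $SL(2,4)\cong A_5$, which has several Sylow $3$-subgroups; hence $PGL(3,4)$ is not TI and again $3\nmid[G:S]$.

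Finally, the two genuine exceptions are precisely those where this obstruction disappears: for $S={}^2G_2(3)'\cong PSL(2,8)$ the order-$3$ field automorphism has $C_S(\alpha)\cong PSL(2,2)\cong S_3$, and for $S={}^2B_2(2^5)$ the order-$5$ field automorphism has $C_S(\alpha)\cong{}^2B_2(2)\cong F_{20}$; in both cases $C_S(\alpha)$ has a \emph{normal} Sylow $p$-subgroup, so the construction above breaks down, and indeed ${}^2G_2(3)=P\Gamma L(2,8)$ and ${}^2B_2(2^5).5$ retain TI Sylow $p$-subgroups, producing groups $G$ with $p\mid[G:S]$. I expect the main obstacle to be exactly these centralizer computations: correctly identifying, for each pair, the automorphism class of order $p$ and verifying the Sylow $p$-multiplicity of $C_S(\alpha)$ (together with checking that TI genuinely persists in the two exceptional extensions), rather than the group-theoretic tool itself.
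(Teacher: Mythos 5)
Your argument is sound and reaches the right conclusion, but it takes a genuinely different route from the paper. The paper's proof is very short: for every pair except $(3,PSL(3,4))$ it simply cites Lemma 3.3 of \cite{EatonPlocalRank}, and for $PSL(3,4)$ it argues via the TI consequence $C_G(x)\leq N_G(P)$ for $1\neq x\in P$, combined with Atlas data: $|C_G(x)|$ is divisible by $5$ or $7$ for a $3$-element $x\notin S$, while $N_S(P\cap S)\cong (P\cap S)\rtimes Q_8$ forces $|N_G(P)|$ to divide $2^3\cdot 3^3$. You instead reprove the whole statement self-containedly via the contrapositive obstruction: an outer $p$-element $\alpha$ whose centraliser $C_S(\alpha)$ has more than one Sylow $p$-subgroup destroys the TI property. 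Your proof of that tool is correct (the uniqueness of the Sylow $p$-subgroup of $S$ over each nontrivial $R_i$, the normalisation of $U,U'$ by $\alpha$, and the resulting $\alpha\in P_1\cap P_2$ with $P_1\neq P_2$ all check out), and your $PSL(3,4)$ computation is right: $C_{SL(3,4)}(\mathrm{diag}(1,1,\omega))\cong GL(2,4)$ meets $Z(SL(3,4))$ in the scalars, so $C_{PSL(3,4)}(\alpha)$ contains a copy of $SL(2,4)\cong A_5$ and has no normal Sylow $3$-subgroup --- this is the same obstruction the paper detects numerically through the primes $5$ and $7$. What your approach buys is uniformity and an explanation of \emph{why} exactly ${}^2G_2(3)'$ and ${}^2B_2(2^5)$ are exceptional (the subfield centralisers $S_3$ and $F_{20}$ have normal Sylow $p$-subgroups); what it costs is that the centraliser identifications for field automorphisms (subfield subgroups of the same type) are standard but not free and would need a reference, which is essentially what Eaton's Lemma 3.3 packages. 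Two small points to tighten: a $p$-element of $G\setminus S$ mapping to an order-$p$ outer class need not itself have order $p$, so you should say explicitly that since $G\supseteq S$ and $G/S$ contains the relevant outer class, the entire coset --- in particular the standard field or diagonal representative of order $p$ --- lies in $G$, and the tool is applied to that representative; and for $PSL(3,4)$ you should note that the Sylow $3$-subgroup of $\mathrm{Out}(S)$ is normal of order $3$, so $3\mid[G:S]$ forces $G\supseteq PGL(3,4)$ with $[G:PGL(3,4)]$ prime to $3$, whence non-TI for $PGL(3,4)$ transfers to $G$.
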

\begin{proof}
Suppose $(p,S)\neq  (3,PSL(3,4))$. Then the result follows by \cite[Lemma 3.3]{EatonPlocalRank}.

Suppose $S\cong PSL(3,4)$ and $S\lhd G$ such that $[G:S]=3$.  Let $P\in {\rm Syl}_3(G)$ and take an element $x\in P,x\not\in S$.  By \cite{ConwayAtlasFiniteGroups}, $|C_G(x)|$ is divisible by $5$ or $7$.

Note that since $\frac{N_G(P\cap S)}{N_S(P\cap S)}=\frac{N_G(P\cap S)}{N_G(P\cap S)\cap S}\cong \frac{N_G(P\cap S)S}{S}\leq\frac{G}{S}$, we must have that $[N_G(P\cap S):N_S(P\cap S)]$ divides $3$. Moreover, $N_S(P\cap S) \cong (P\cap S)\rtimes Q_8$  \cite{ConwayAtlasFiniteGroups}, which implies that $|N_G(P\cap S)|$ divides $3^3\cdot 2^3$. Finally since $N_G(P)\leq N_G(P\cap S)$, then gcd$(5,|N_G(P)|)=1=$ gcd$(7,|N_G(P)|)$. If $P$ is trivial intersection, then $C_G(x)\leq N_G(P)$, leading to a contradiction. Hence if $G$ has trivial intersection Sylow $p$-subgroups, then $G$ is a $p'$-extension of $S$.
\end{proof}

\noindent
Motivated by the examples presented in the next section, we make the following conjecture.
Let $G$ be a finite group and  $B=B_0(G)$ with TI defect groups $D$. Let $b\in Bl(N_G(D))$ such that $b^G=B$. Then
\begin{description}
  \item[for $p=2$,]  $B$ is derived equivalent to $b$ if and only if $D$ is abelian or generalised quaternion;
  \item[for $p>2$,]  $B$ is derived equivalent to $b$ if and only if $D$ is abelian or $D\cong 3_{-}^{1+2}$, $5_{-}^{1+2}$.
\end{description}

The aim of this article is to prove that the centres of $B$ and $b$ are not isomorphic when  $D$ is ``small'' and not abelian; we do this for  characteristic $p=3$ and $5$.
The main obstacle in proving the conjecture in full generality comes from the projective special unitary groups. A small number of individual cases are considered in Section \ref{sec:cases}, however further research extending the results presented is required to establish the non-existence of an isomorphism for $PSU(3, p^n)$ for an arbitrary prime $p$ and integer $n$. In particular, our bound on the sizes of the defect groups considered arises from this constraint. In addition, the question of what happens in the automorphism groups of the Suzuki and Ree groups is still open.

\section{Explicit calculations}\label{sec:cases}

The Loewy length of an algebra $A$, denoted $LL(A)$, is defined to be the nilpotency length of its Jacobson radical $J(A)$. Calculating the Loewy length of the centre of a block, and in particular the dimensions of the radical layers, form an important tool in establishing the non-existence of an isomorphism between two centres of blocks.  

In \cite{Cliff}, Cliff proved that for $G=$ $^2B_2(q)$, where $q=2^{2m+1}\geq8$, the centre of the principal $2$-block of $kG$ and the centre of the Brauer correspondent in $kN_G(P)$ are isomorphic over a field of characteristic $2$, but not over a discrete valuation ring $\mathcal{O}$. In particular, Cliff  concluded that an isomorphism in characteristic $2$ exists from the fact that the centres have the same dimension over $k$ and the Jacobson radical squared of each centre is equal to zero.

In \cite{BroughS},  the authors prove that in characteristic $3$, the centre of the principal block of the Ree group, $^2G_2(q)$, has Loewy length $3$ while the centre of its Brauer correspondent has Loewy length $2$.  It therefore follows that $Z(B_0)\not\cong Z(b_0)$.
In this section the difference in the dimension of the radical squared for various groups allows us to draw the same conclusion.

Due to the sizes of the groups considered, the computations were carried out in the computer algebra system GAP \cite{GAP}; the code can be found in the Appendix. 

\begin{remark} 
All groups considered in this article have trivial intersection Sylow $p$-subgroups.
Hence as a consequence of Green's Theorem \cite[Theorem 3]{Green68}, the group algebra decomposes as blocks of full defect, with defect groups $P$, and blocks of defect zero. Moreover, the number of blocks of $kG$ with defect groups $P$ is equal to the number of $p$-regular conjugacy classes with defect groups $P$ \cite{BrauerNesbitt}. It can be checked from the character table, that for the groups where we explicitly calculate the dimensions of the Loewy layers, there is only one block of full defect, the principal block. Additionally, the Jacobson radical of the centre of blocks with defect zero is zero; hence ${\rm dim}(J^n(Z(kG))={\rm dim}(J^n(Z(B_0))$ for all $n\geq 1$. Consequently, the calculations in GAP can be done over the centre of the group algebra, without having to worry about the principal block idempotent. 

Finally, for the groups $G$ considered in this article, $N_G(P)$ is $p$-solvable  and $O_{p'}(G)=1$. Hence the group algebras $kN_G(P)$ are indecomposable \cite[Proposition III.1.12]{KarpilovskyJacRadical} and $b_0=kN_G(P)$.
\end{remark}

\subsection{The McLaughlin group $McL$, and ${\rm Aut}(McL)$, with $p=5$}\label{sec:McLp5}
Consider the McLaughlin group and  fix the prime $p=5$.
\subsubsection{The group $McL$ }
Let $P$ be a Sylow $5$-subgroup of $McL$; note that $P$ is not abelian. The normaliser $N_G(P)\cong (((C_5\rtimes C_5)\rtimes C_5)  \rtimes C_3)\rtimes C_8$ splits into $19$ conjugacy classes \cite{ConwayAtlasFiniteGroups}; hence ${\rm dim}(Z(kN_G(P)))$ $=19$. 

For $g\in G$, let $\C(g)=\{h^{-1}gh\;|\; h\in G\}$ denote the conjugacy class of $g$. Then the conjugacy class sum of $g$ is defined to be $\Chat(g)= \sum_{h\in \C(g)} h$.

From \cite{ConwayAtlasFiniteGroups}, all non-trivial conjugacy classes have class size divisible by $5$ except $\C(5A)$ which has class size $|\C(5A)|=4$; hence a basis for $J(Z(kN_G(P)))$ is given by
\[ \mathfrak{B}_{N_G(P)}=\{ \Chat(x)\;|\; x\in  \mathscr{P}, x\neq 1_{N_G(P)}, x\not\in\C(5A) \} \cup \{\Chat(5A)+1\}.\]
There exist basis elements $b,b'\in\mathfrak{B}_{N_G(P)} $  such that
$b\cdot b'  \neq 0$; in particular the following distinct non-zero multiplications occur
\[\begin{array}{rcl}
\Chat(3A)\cdot \Chat(3A)&=&\Chat(3A)+\Chat(15A)+\Chat(15B);\\
\Chat(5B)\cdot \Chat(10A)&=&\Chat(2A)+\Chat(10A);\\
\Chat(2A)\cdot \Chat(3A)&=&\Chat(6A)+\Chat(30A)+\Chat(30B).\\
\end{array}\]
Any other pair of conjugacy class sums in $\mathfrak{B}_{N_G(P)}$ either multiplies to zero in $kN_G(P)$ or is a non-zero multiple of the three given multiplications.
Hence $J^2(Z(kN_G(P)))$ has dimension $3$ and a basis given by \[\mathfrak{B}^{(2)}_{N_G(P)}=\{\Chat(3A)+\Chat(15A)+ \Chat(15B),\Chat(2A) +\Chat(10A), \Chat(6A) + \Chat(30A) + \Chat(30B)\}.\]
Next we need to establish whether $J^3(Z(kN_G(P)))=0$. Due to the size of the group, we use GAP to explicitly calculate that for all $b,b',b'' \in \mathfrak{B}_{N_G(P)}$ we have $b\cdot b'\cdot b''=0$ (the code can be found in the Appendix). Hence  $J^3(Z(kN_G(P)))=0$ and so $LL(Z(kN_G(P)))=3$.

In characteristic $5$, the group algebra of the McLaughlin group decomposes into $6$ blocks: the principal block, $B_0$, of defect $3$ and five blocks of defect zero. By a result of Blau and Michler  \cite[Theorem 9.2]{Blau},  ${\rm dim}(Z(B_0))=19$.

All non-trivial conjugacy classes of $McL$ have class size divisible by $5$, except for $\C(5A)$ which has size $|\C(5A)| = 1197504$. Hence consider the set
\[ \mathfrak{D}_G=\{ \Chat(x)e_0\;|\; x\in \mathscr{P}, x\neq 1_G, x\not\in  \C(5A) \} \cup \{(\Chat(5A)+1)e_0\}.\]
This set is not linearly independent, however it is a spanning set for $J(Z(kGe_0))$, which is enough for our calculations.
In GAP  we can calculate that for all $b,b',b''\in\mathfrak{D}_G $ we have $b\cdot b'\cdot b''=0$.

\noindent
At the same time note that there exist elements $b,b'\in\mathfrak{D}_G$  such that $b\cdot b' \neq 0$.
More precisely, we calculate in GAP that ${\rm dim}(J^2(Z(kGe_0)))={\rm dim}(J^2(Z(kG)))=4$. The calculations above lead to the following theorem.
\begin{theorem}\label{th:DimMcL}
Let $G=McL$, $P\in{\rm Syl}_5(P)$ and $k$ an algebraically closed field of characteristic $p=5$. Then $LL(Z(kGe_0))= LL(Z(kN_G(P))) =3$. Moreover,
\[ {\rm dim}(J^2(Z(kGe_0)))= 4 \neq 3 = {\rm dim}(J^2(Z(kN_G(P)))), \]
and therefore $Z(kGe_0) \not\cong Z(k)$.
\end{theorem}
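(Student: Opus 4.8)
The statement is a bookkeeping consequence of the explicit radical computations carried out above, combined with the elementary fact that the dimensions of the radical layers $J^n/J^{n+1}$ are invariants of a finite-dimensional algebra up to isomorphism. The plan is therefore to assemble the two pieces of data already obtained --- that both centres have Loewy length $3$, but that $\dim J^2(Z(kGe_0))=4$ while $\dim J^2(Z(kN_G(P)))=3$ --- and then to invoke this invariance to rule out an algebra isomorphism.

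First I would record the normaliser side. The three displayed products exhibit linearly independent elements of $J^2(Z(kN_G(P)))$, and the accompanying observation that every other product of elements of $\mathfrak{B}_{N_G(P)}$ either vanishes or is a scalar multiple of one of these shows that $\mathfrak{B}^{(2)}_{N_G(P)}$ is a basis, so $\dim J^2(Z(kN_G(P)))=3$; the GAP verification that $b\cdot b'\cdot b''=0$ for all $b,b',b''\in\mathfrak{B}_{N_G(P)}$ gives $J^3(Z(kN_G(P)))=0$, whence $LL(Z(kN_G(P)))=3$. For the McLaughlin side I would use the reduction from the preliminary remark, namely that $\dim J^n(Z(kGe_0))=\dim J^n(Z(kG))$ for all $n\geq 1$, so that the principal block idempotent may be disregarded and one works inside $Z(kG)$. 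Since $\mathfrak{D}_G$ spans $J(Z(kGe_0))$, the GAP computation that all triple products of its elements vanish certifies $J^3(Z(kGe_0))=0$, while the existence of a nonzero product gives $J^2(Z(kGe_0))\neq 0$; hence $LL(Z(kGe_0))=3$, and the same computation returns $\dim J^2(Z(kGe_0))=4$.

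It then remains to combine these. An algebra isomorphism $\varphi\colon Z(kGe_0)\to Z(kN_G(P))$ would carry the Jacobson radical to the Jacobson radical, hence each power $J^n$ to $J^n$, and so induce isomorphisms of the radical layers $J^n/J^{n+1}$; in particular it would preserve $\dim J^2$. Since $\dim J^2(Z(kGe_0))=4\neq 3=\dim J^2(Z(kN_G(P)))$, no such $\varphi$ can exist, giving $Z(kGe_0)\not\cong Z(kN_G(P))$.

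I expect the substantive difficulty to lie entirely in the McLaughlin computation, not in the final invariance argument, which is routine. Because the class $5A$ has size over a million and the centre has dimension $19$, both the vanishing $J^3(Z(kGe_0))=0$ and the value $\dim J^2(Z(kGe_0))=4$ are only feasible to establish by machine; the delicate point is to ensure that $\mathfrak{D}_G$ genuinely spans $J(Z(kGe_0))$ --- which relies on all non-trivial classes other than $\C(5A)$ having $5\mid|\C(x)|$, together with the correction term $(\Chat(5A)+1)e_0$ --- so that the computed vanishing of triple products really does certify $J^3(Z(kGe_0))=0$ rather than merely the vanishing of triple products of a proper subspace.
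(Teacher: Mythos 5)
Your proposal is correct and follows essentially the same route as the paper: the paper's argument for this theorem consists precisely of the class-sum multiplications and GAP computations recorded before the statement (giving $\dim J^2=3$, $LL=3$ on the normaliser side and $\dim J^2=4$, $LL=3$ for the principal block, using that $\mathfrak{D}_G$ spans $J(Z(kGe_0))$ and that defect-zero blocks contribute nothing to the radical), combined with the standard fact that an algebra isomorphism preserves the powers of the Jacobson radical. (The ``$Z(k)$'' in the statement is a typo for $Z(kN_G(P))$, as you implicitly assumed.)
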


\subsubsection{The group $Aut(McL)$}
Let $G={\rm Aut}(McL) \cong McL.2$, which has $33$ conjugacy classes. The group algebra $kG$ decomposes into $7$ blocks: the principal block $B_0$ of defect $3$ and $6$ blocks of defect zero. As usual, $b_0$ is simply given by the group algebra $kN_G(P)$.

Using the same methods  as those  for $McL$, the following results are obtained:
\[\begin{array}{cccccc}
\hline
\text{Block }B& \text{defect}& {\rm dim}(Z(B))& LL(Z(B))&{\rm dim}(J^2(Z(B)))\\
\hline
B_0&  3& 26& 3 & 5 \\
b_0&  3&   26&  3 &4  \\
\hline
\end{array}\]
Similarly to the result in the McLaughlin group, we cannot have an isomorphism of the centres.
\begin{theorem}\label{th:DimAutMcL}
Let $G={\rm Aut}(McL)$, $P\in{\rm Syl}_5(P)$ and $k$ a field of characteristic $p=5$. Then
\[{\rm dim}(J^2(Z(kGe_0)))= 5 \neq 4 = {\rm dim}(J^2(Z(kN_G(P)))), \]
and therefore  $Z(kGe_0) \not\cong Z(kN_G(P))$.
\end{theorem}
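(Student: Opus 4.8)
The plan is to run, for $G=\mathrm{Aut}(McL)\cong McL.2$, exactly the argument already carried out for $McL$ in Theorem~\ref{th:DimMcL}, since $G$ again has trivial intersection Sylow $5$-subgroups and hence falls under the same machinery. First I would read off from the character table of $G$ (computed in GAP \cite{GAP}) the complete list of its $33$ conjugacy classes together with their sizes, and confirm the stated block decomposition: by Green's Theorem \cite{Green68} every block has either full defect $3$ or defect $0$, and the character data show that the unique full-defect block is the principal block $B_0$, so that $kG=B_0\oplus(\text{six defect-zero blocks})$. On the local side, $N_G(P)$ is $5$-solvable with $O_{5'}(N_G(P))=1$, so $b_0=kN_G(P)$ is indecomposable, and by Blau--Michler \cite{Blau} both centres have dimension $26$.

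Next, invoking the Remark, I would pass from the blocks to the full group algebra centres $Z(kG)$ and $Z(kN_G(P))$: the centre of every defect-zero block has vanishing radical, so $\dim J^n(Z(kGe_0))=\dim J^n(Z(kG))$ for all $n\ge1$ and the idempotent $e_0$ can be suppressed. I would then assemble spanning sets for the two radicals out of conjugacy class sums. The principal central character sends $\Chat(g)e_0\mapsto |\C(g)|\bmod 5$, so $\Chat(g)$ lies in $J(Z(kGe_0))$ precisely when $5\mid|\C(g)|$; the non-identity class whose size is prime to $5$ (the analogue of $\C(5A)$, handled exactly as in the $McL$ case) must be combined with the identity, replacing $\Chat(5A)$ by $\Chat(5A)+1$, in order to land in the kernel of that character. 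This produces explicit spanning sets $\mathfrak{D}_G$ for $J(Z(kGe_0))$ and $\mathfrak{B}_{N_G(P)}$ for $J(Z(kN_G(P)))$, mirroring the McLaughlin construction.

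Finally, I would compute in GAP all pairwise products $b\cdot b'$ of spanning elements on each side to pin down $J^2$, and all triple products $b\cdot b'\cdot b''$ to verify that they vanish, giving $J^3=0$ and hence $LL(Z(kGe_0))=LL(Z(kN_G(P)))=3$. The arithmetic to be extracted is that the independent products span a $5$-dimensional space on the group side but only a $4$-dimensional space on the normaliser side, yielding $\dim J^2(Z(kGe_0))=5$ and $\dim J^2(Z(kN_G(P)))=4$. Since any algebra isomorphism preserves the dimension of each radical power, this discrepancy at $n=2$ immediately forces $Z(kGe_0)\not\cong Z(kN_G(P))$, which is the assertion of the theorem.

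The main obstacle is purely computational: with $33$ classes, several of very large size, the products of class sums are far too unwieldy to expand by hand, so the determination of $\dim J^2$ and the verification that $J^3=0$ must be delegated to GAP. The only conceptual care needed lies in the bookkeeping for the $.2$ extension, namely tracking which $McL$-classes fuse or split and which classes arise from the outer coset, and in checking that the augmented element $\Chat(5A)+1$ genuinely lies in the radical while no other small class intervenes, so that the spanning sets faithfully capture the radicals.
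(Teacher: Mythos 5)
Your proposal is correct and follows essentially the same route as the paper, which simply states that the results for $\mathrm{Aut}(McL)$ are ``obtained using the same methods as those for $McL$'' and records the outcome $\dim J^2(Z(B_0))=5\neq 4=\dim J^2(Z(b_0))$ in a table; your write-up just makes explicit the steps (block decomposition via Green's theorem, passage from $kGe_0$ to $kG$, spanning sets of class sums, GAP computation of $J^2$ and $J^3$) that the paper delegates to the $McL$ subsection and the Appendix code. No gaps.
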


\subsection{The Janko group $J_4$ with $p=11$}\label{Sec:Janko4}
Let $G=J_4$, a sporadic simple group, and  fix the prime $p=11$. In characteristic $11$, the group algebra $kJ_4$ decomposes into $14$ blocks: the principal block, $B_0$, of defect $3$ and $13$ blocks of defect zero.
Let $P$ be a Sylow $11$-subgroup of $J_4$. The normaliser $N_G(P)$ has  $49$ conjugacy classes \cite{ConwayAtlasFiniteGroups}; hence ${\rm dim}(Z(kN))={\rm dim}(Z(kGe_0)) =49$.

Using GAP, the following results are obtained:
\[\begin{array}{cccccc}
\hline
\text{Block }B& \text{defect}& {\rm dim}(Z(B))& LL(Z(B))&{\rm dim}(J^2(Z(B)))\\
\hline
B_0&  3& 49& 3 & 5 \\
b_0&  3 &49&  3 &4  \\
\hline
\end{array}\]
We get the following  theorem.

\begin{theorem}\label{th:DimJ4}
Let $G=J_4$, $P\in{\rm Syl}_{11}(P)$ and $k$ an algebraically closed field of characteristic $p=11$. Then $LL(Z(kGe_0))= LL(Z(kN_G(P)))=3$. Moreover,
\[ {\rm dim}(J^2(Z(kGe_0)))= 5 \neq 4 = {\rm dim}(J^2(Z(kN_G(P)))),\]
and  therefore $Z(kGe_0) \not\cong Z(kN_G(P))$.
\end{theorem}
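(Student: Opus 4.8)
The plan is to follow exactly the template already established for $McL$ and $\mathrm{Aut}(McL)$: reduce the comparison of the two block centres to a computation with conjugacy class sums, exhibit spanning sets for the two Jacobson radicals, and then read off the dimensions of the radical layers from explicit products computed in GAP. Note that both centres have the same dimension $49$ (as they must, by the theorem of Blau and Michler), so the total dimension cannot separate them; the finer data of the radical layers is genuinely needed, and indeed even the Loewy length will agree, so the decisive invariant is $\dim J^2$.

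First I would invoke the reduction in the opening remark of this section. Since $J_4$ has TI Sylow $11$-subgroups, Green's Theorem forces $kG$ to split into blocks of full defect $P$ and blocks of defect zero; from the character table one checks that the principal block $B_0$ is the unique full-defect block, the remaining $13$ blocks having defect zero. As the centre of a defect-zero block is a copy of $k$ with vanishing radical, $\dim(J^n(Z(kGe_0))) = \dim(J^n(Z(kG)))$ for all $n \geq 1$, so the entire group-side computation may be carried out in $Z(kG)$, only multiplying by $e_0$ at the end. On the normaliser side $N_G(P)$ is $p$-solvable with $O_{p'}(G)=1$, so $kN_G(P)$ is indecomposable and $b_0 = kN_G(P)$; its $49$ conjugacy classes give $\dim(Z(kN_G(P))) = 49$.

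Next I would set up the radical spanning sets. The semisimple quotient $Z(kGe_0)/J(Z(kGe_0)) \cong k$ is detected by the central character of the trivial module, which sends a class sum $\Chat(g)$ to $|\C(g)| \bmod 11$; hence $\Chat(g)e_0$ lies in $J(Z(kGe_0))$ exactly when $11$ divides the class size, while for each class of size coprime to $11$ one takes instead the corrected element $(\Chat(g)+\lambda\cdot 1)e_0$ lying in the radical, just as the term $\Chat(5A)+1$ appears in the $McL$ calculation. Reading off the class sizes of $J_4$ from the character table produces an explicit spanning set $\mathfrak{D}_G$ for $J(Z(kGe_0))$, and the analogous procedure on $N_G(P)$ yields a basis $\mathfrak{B}_{N_G(P)}$ for $J(Z(kN_G(P)))$.

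The remaining content is computational. I would have GAP form all products $b\cdot b'$ of spanning-set elements to compute $\dim(J^2(Z(kGe_0))) = 5$ and $\dim(J^2(Z(kN_G(P)))) = 4$, and then check that every triple product $b\cdot b'\cdot b''$ vanishes, so that $J^3 = 0$ and $LL(Z(kGe_0)) = LL(Z(kN_G(P))) = 3$ on both sides. Since an algebra isomorphism preserves the Jacobson radical and hence the dimension of each of its powers, the inequality $5 \neq 4$ at the level of $J^2$ rules out any isomorphism $Z(kGe_0) \cong Z(kN_G(P))$. The obstacle here is not conceptual but one of scale: in a group as large as $J_4$ one must correctly identify the few classes of size coprime to $11$ whose class sums require the identity correction, and verify the vanishing of all triple products in $Z(kG)$, which is the heaviest part of the GAP verification.
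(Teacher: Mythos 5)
Your proposal is correct and follows essentially the same route as the paper: the paper's argument for $J_4$ is precisely the template from the $McL$ computation (reduction to $Z(kG)$ via the unique full-defect block, radical spanning sets built from class sums with an identity correction for classes of size coprime to $p$, and GAP verification that $\dim J^2$ is $5$ versus $4$ while $J^3=0$ on both sides). The paper simply records the resulting table without re-deriving the setup, so your write-up is, if anything, more explicit than the original.
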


\subsection{The projective special unitary groups}\label{sec:PSUgroups}
In this section we consider some projective special unitary groups and calculate some block theoretic properties which are required for consideration of our question. 
The calculations follow the same idea as the method given for $McL$ and $J_4$; the GAP code used in the calculations can be found in the Appendix. We only summarise the results here; for more details see \cite{SchwabrowPhD}.

\begin{table}[h!]
\centering
\caption{Summary of block information:}\label{tb:PSUblocks}
\[\begin{array}{lc|ccccccccc}
\hline
\text{G} &\text{ prime}&\text{Block }B& \text{defect}& {\rm dim}(Z(B))& LL(Z(B))&{\rm dim}(J^2(Z(B)))\\
\hline
PSU(3,4)&2 & B_0& 6  &21 & 3&5  \\
& & b_0&  6 &21&  3&4  \\
\hline
PSU(3,8)&2 & B_0& 9  &27 & 3&3  \\
& & b_0&  9 &27&  3&2  \\
\hline
PSU(3,3)&3 & B_0& 3  &13 & 3&4  \\
& & b_0&  3 &13&  3&3  \\
\hline
PSU(3,3):2&3 & B_0& 3 &14 & 3&4  \\
& & b_0&  3 &14&  3&3  \\
\hline
PSU(3,9)&3 & B_0& 6  &91 & 3&10  \\
& & b_0&  6&91&  3&9 \\
\hline
PSU(3,9):2&3 & B_0& 6  &62 & 3&7  \\
& & b_0&  6&62&  3&6 \\
\hline
PSU(3,9):4&3 & B_0& 6  &46 & 3&5  \\
& & b_0&  6&46&  3&4 \\
\hline
PSU(3,5)&5 & B_0& 3  &13 & 3&2  \\
& & b_0&  3 &13&  3&1  \\
\hline
PSU(3,5):2&5 & B_0& 3  &17 & 3&3  \\
& & b_0&  3 &17&  3&2  \\
\hline
PSU(3,5):3&5 & B_0& 3  &31 & 3&6  \\
& & b_0&  3 &31&  3&5  \\
\hline
PSU(3,5):S_3&5 & B_0& 3 &26 & 3&5  \\
& & b_0&  3 &26&  3&4  \\
\hline
PSU(3,7)&7 & B_0& 3  &57 & 3&8  \\
& & b_0&  3 &57&  3&7  \\
\hline
PSU(3,7):2&7 & B_0& 3  &42 & 3&6  \\
& & b_0&  3 &42&  3&5  \\
\hline
\end{array}\]
\end{table}

\begin{theorem}\label{th:PSU}
Let $G$ be any of the group given in Table \ref{tb:PSUblocks} with corresponding prime $p$. Let $P\in {\rm Syl}_p(G)$ and $k$ is an algebraically closed field of characteristic $p$. Then
\[ {\rm dim}(J^2(Z(kGe_0)))  = {\rm dim}(J^2(Z(kN_G(P))))\;+1, \]
and so $Z(kGe_0) \not\cong Z(kN_G(P))$.
\end{theorem}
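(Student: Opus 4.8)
\section*{Proof proposal}

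The plan is to reduce each row of Table~\ref{tb:PSUblocks} to a finite computation in the centre of a group algebra, carried out exactly as in the treatment of $McL$ (Theorem~\ref{th:DimMcL}) and $J_4$ (Theorem~\ref{th:DimJ4}), and then to invoke the elementary fact that an algebra isomorphism preserves the Loewy layers. I would begin with the structural reductions. Each group $G$ in the table has TI Sylow $p$-subgroups, so by Green's Theorem $kG$ splits into the principal block $B_0=kGe_0$ of full defect and blocks of defect zero; I would check from the character table (consistent with the ``defect'' column and $\dim Z(B_0)$ recorded in the table) that $B_0$ is the unique full-defect block. Since the centre of a defect-zero block is a field, its radical vanishes, whence $\dim J^n(Z(kGe_0))=\dim J^n(Z(kG))$ for all $n\geq1$. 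On the local side $N_G(P)$ is $p$-solvable with $O_{p'}(G)=1$, so $kN_G(P)$ is indecomposable and $b_0=kN_G(P)$. Thus it suffices to compute $\dim J^2$ of the two centres $Z(kGe_0)$ and $Z(kN_G(P))$.

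Next I would set up the class-sum description of the two radicals. The class sums $\Chat(x)$ form a $k$-basis of $Z(kG)$, and multiplying by $e_0$ and projecting onto $Z(B_0)/J(Z(B_0))\cong k$ is the principal-block central character $\Chat(x)e_0\mapsto \overline{|\C(x)|}$, the reduction of the class size modulo $p$. Hence $\Chat(x)e_0$ lies in the radical exactly when $p\mid|\C(x)|$; for the finitely many exceptional classes with $p\nmid|\C(x)|$ one adds a suitable scalar multiple of the identity, as in the correction $\Chat(5A)+1$ used for $McL$, to land in $J$. This produces an explicit spanning set $\mathfrak{D}$ for $J(Z)$ (linear independence is not needed, a spanning set suffices). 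The space $J^2(Z)$ is then the $k$-span of the pairwise products $b\,b'$ with $b,b'\in\mathfrak{D}$, computed from the structure constants of the class algebra; the same construction applied to the single block $kN_G(P)$ handles the local side.

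I would then carry out, for each group and prime in the table, the computation of $\dim J^2$ for both $Z(kGe_0)$ and $Z(kN_G(P))$ in GAP, reproducing the values tabulated in Table~\ref{tb:PSUblocks}, and read off in every row that $\dim J^2(Z(kGe_0))=\dim J^2(Z(kN_G(P)))+1$. Finally, any algebra isomorphism carries $J$ to $J$ and hence $J^2$ to $J^2$, so it would force $\dim J^2(Z(kGe_0))=\dim J^2(Z(kN_G(P)))$; since these differ by one, no such isomorphism exists, giving $Z(kGe_0)\not\cong Z(kN_G(P))$.

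The main obstacle is not any single step but the absence of a uniform argument: each of the thirteen rows must be treated separately because there is no closed formula for $\dim J^2$ as a function of $q$ (and of the field/graph automorphisms distinguishing $PSU(3,q)$ from its extensions $PSU(3,q){:}2$, ${:}3$, ${:}S_3$, ${:}4$). For the larger cases, such as $PSU(3,9)$ with $\dim Z(B_0)=91$, the number of conjugacy classes and the size of the class-multiplication table make the $J^2$ computation heavy enough to require computer algebra, and one must also verify the structural hypotheses (uniqueness of the full-defect block, indecomposability of $kN_G(P)$, and the exact list of exceptional classes with $p\nmid|\C(x)|$) group by group. Extracting the observed pattern $+1$ into a $q$-indexed formula that would settle all $PSU(3,p^n)$ at once is precisely the difficulty left open for future work.
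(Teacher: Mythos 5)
Your proposal is correct and follows essentially the same route as the paper: the paper's own ``proof'' of Theorem \ref{th:PSU} is simply the remark that the calculations follow the method used for $McL$ and $J_4$ (class-sum spanning set for the radical, GAP computation of $\dim J^2$ via the structure constants, reduction to the principal block using Green's Theorem and the indecomposability of $kN_G(P)$), with the results recorded in Table \ref{tb:PSUblocks}. Your write-up makes the reduction steps and the final isomorphism-invariance argument more explicit than the paper does, but there is no substantive difference in approach.
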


Some of the calculations regarding $PSU(3,p^r)$ were done independently by Bouc and Zimmermann in a recent paper \cite{BoucZi}. Motivated by a question of Rickard, the authors state the same results for the principal $p$-block of the group $PSU(3,p^r)$ and its Brauer correspondent for $p^r\in \{3,4,5,7,8\}$.
Moreover, Bouc and Zimmermann explicitly calculate ${\rm dim}(J^2(Z(b_0)))$  and the Loewy length for the normaliser \cite[Theorem 41]{BoucZi}. 
They also make the observation that the examples give rise to the following conjecture.
\begin{conj}\cite[Remark 15]{BoucZi} Let $G=PSU(3,p^r)$, $B_0$ the principal block of $kG$, and $b_0$ the Brauer correspondent of $B_0$ in $kN_G(P)$. Then
\[{\rm dim}(J^2(Z(B_0)))= 1+  {\rm dim}(J^2(Z(b_0))).\]
\end{conj}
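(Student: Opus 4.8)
The plan is to replace the finite case-by-case verification of Table~\ref{tb:PSUblocks} by a computation carried out uniformly in $q=p^r$, using the generic character table of $PSU(3,q)$ (due to Simpson and Frame) together with the explicit structure of the Borel normaliser $N_G(P)$. Both $Z(B_0)=Z(kGe_0)$ and $Z(b_0)=Z(kN_G(P))$ carry a basis of conjugacy class sums, and since the Sylow $p$-subgroups are TI, Green's Theorem guarantees that the radical layers can be read off from the class-multiplication constants reduced modulo $p$. As these two algebras are local, commutative, of equal dimension \cite[Theorem 9.2]{Blau} and (in the generic analogue of the computations for $McL$ and $J_4$) of Loewy length $3$, each is determined up to isomorphism by the symmetric bilinear pairing $J/J^2\times J/J^2\to J^2$; it therefore suffices to show that the rank of this pairing for $B_0$ exceeds that for $b_0$ by exactly one, that is, $\dim J^2(Z(B_0))=1+\dim J^2(Z(b_0))$.

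First I would treat the normaliser. Here $N_G(P)=U\rtimes T$, where $U$ is the special group of order $q^3$ with $Z(U)=[U,U]$ of order $q$, and $T$ is cyclic of order $(q^2-1)/d$ with $d=\gcd(3,q+1)$, acting by the natural characters on $Z(U)\cong\mathbb{F}_q^{+}$ and on $U/Z(U)\cong\mathbb{F}_{q^2}^{+}$. The conjugacy classes are the $T$-orbits on the classes of $U$ together with the classes meeting $T$ nontrivially; enumerating these and computing $\Chat(x)\Chat(y)$ yields the bilinear pairing for $b_0$ as an explicit expression in $q$. The outcome should be a closed formula for $\dim J^2(Z(b_0))$, matching \cite[Theorem 41]{BoucZi} and the $b_0$-rows of Table~\ref{tb:PSUblocks}.

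Next I would attack the group side through the class-algebra structure constants
\[ a_{g,h,k}=\frac{|\C(g)|\,|\C(h)|}{|G|}\sum_{\chi\in\mathrm{Irr}(G)}\frac{\chi(g)\,\chi(h)\,\overline{\chi(k)}}{\chi(1)}, \]
which, as noted in Section~\ref{sec:cases}, may be computed in $Z(kG)$ since $J^n(Z(kG))=J^n(Z(kGe_0))$ for all $n\geq 1$. Feeding the generic character values of $PSU(3,q)$ into these constants and reducing modulo $p$ produces the pairing for $B_0$; comparing it with the one obtained for $b_0$, the goal is to exhibit exactly one class-sum relation present for $B_0$ but absent for $b_0$ — the single extra element of $J^2(Z(kGe_0))$ — while matching all remaining relations under the perfect isometry afforded by the stable equivalence of Morita type.

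The hard part will be making this genuinely uniform in $q$. The character values of $PSU(3,q)$ involve $(q+1)$-st and $(q^2-1)$-st roots of unity and Gauss-sum–type quantities whose reductions modulo $p$ are sensitive to congruences of $q$; in particular the analysis must split according to $d=\gcd(3,q+1)\in\{1,3\}$, since the number and sizes of the semisimple and mixed classes — and hence the bases of both centres — change accordingly. Within each case one must show both that the extra relation in $J^2(Z(kGe_0))$ does not collapse for any $q$ and that no compensating extra relation appears on the normaliser side, so that the discrepancy is exactly $+1$ for every prime power rather than only for the tabulated values. Pinning down a congruence-free, conceptual origin for this single extra dimension — ideally traceable to a distinguished pair of characters in the perfect isometry whose sign contribution lowers the rank of the $b_0$-pairing by one — is where I expect the real difficulty, and the principal gap left open by the present article, to lie.
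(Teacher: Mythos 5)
The first thing to be clear about is that the paper does not prove this statement: it is quoted verbatim as a conjecture of Bouc and Zimmermann \cite[Remark 15]{BoucZi}, and the paper's own contribution (Theorem \ref{th:PSU}, resting on Table \ref{tb:PSUblocks}) is a GAP verification for the finitely many values $q\in\{3,4,5,7,8,9\}$ together with some automorphism-group extensions. So there is no proof in the paper against which to measure your attempt; what you have written is a research programme for an open problem, and it must be judged as such. As a programme the outline is sensible --- class sums, generic Simpson--Frame structure constants reduced modulo $p$, and a hands-on treatment of $N_G(P)=U\rtimes T$ are exactly how one would try to make the verification uniform in $q$, and the normaliser half has indeed been carried out in \cite[Theorem 41]{BoucZi}. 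But none of the substantive steps on the group side is executed: the reduction of the character-theoretic structure constants modulo $p$ uniformly in $q$ (where all the congruence subtleties live), the generic proof that $LL(Z(B_0))=3$, and the generic identification of the principal block as the unique block of positive defect are all stated as goals, not established. Your own closing paragraph concedes that the uniform-in-$q$ computation is ``the principal gap''; that gap is the entire problem, so the proposal contains essentially no proof of the statement.

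Beyond incompleteness, one step is actually wrong. You propose to match ``all remaining relations under the perfect isometry afforded by the stable equivalence of Morita type.'' A stable equivalence of Morita type does not afford a perfect isometry; what it gives is an isomorphism of \emph{stable} centres \cite[Proposition 5.4]{BroueEquivalence}. Worse, a perfect isometry between $B_0$ and $b_0$ would induce an algebra isomorphism $Z(B_0)\cong Z(b_0)$ \cite[Theorem 1.5]{Broue1} (see Remark \ref{rema:DerivedPerfectIso}), which would force ${\rm dim}(J^2(Z(B_0)))={\rm dim}(J^2(Z(b_0)))$, flatly contradicting the equality you are trying to prove. So the matching device you invoke cannot exist if the statement is true. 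A correct argument would have to route the comparison through the isomorphism of stable centres instead, locating the one-dimensional discrepancy in the ideal by which the centre differs from the stable centre, or else carry out the two generic computations independently and compare the resulting closed formulas.
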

For us, the conjecture would have the following consequence.
\begin{conj} Let $G=PSU(3,p^r)$, $B_0$ the principal block of $kG$, and $b_0$ the Brauer correspondent of $B_0$ in $kN_G(P)$. Then
\[Z(B_0)\not\cong Z(b_0). \]
\end{conj}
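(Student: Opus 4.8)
The plan is to deduce the statement from the preceding conjecture by a purely formal argument. Any isomorphism of $k$-algebras $\varphi\colon Z(B_0)\to Z(b_0)$ necessarily maps the Jacobson radical onto the Jacobson radical, and hence carries $J^n(Z(B_0))$ isomorphically onto $J^n(Z(b_0))$ for every $n\geq 1$. In particular such an isomorphism would force
\[ {\rm dim}\big(J^n(Z(B_0))\big)={\rm dim}\big(J^n(Z(b_0))\big)\qquad\text{for all }n\geq 1. \]
Taking $n=2$ and invoking the preceding conjecture, which asserts ${\rm dim}(J^2(Z(B_0)))=1+{\rm dim}(J^2(Z(b_0)))$, produces a contradiction. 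Hence no such $\varphi$ exists and $Z(B_0)\not\cong Z(b_0)$. This is exactly the mechanism already used in Theorems \ref{th:DimMcL}, \ref{th:DimJ4} and \ref{th:PSU}, where the two radical-square dimensions are computed separately and seen to differ by one.

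Thus the whole content of the statement reduces to the preceding conjecture, and the real work is to establish that dimension formula uniformly in $q=p^r$. First I would use the Remark together with Green's Theorem to replace the block algebras by the full group algebras: since $G=PSU(3,q)$ has TI Sylow $p$-subgroups, $kG$ decomposes into the principal block and blocks of defect zero, the latter contributing nothing to $J^n$ for $n\geq 1$, so it suffices to analyse $Z(kG)$ and $Z(kN_G(P))$ directly via conjugacy class sums. The normaliser side is the more tractable one: $N_G(P)$ is a solvable group of Borel type whose generic conjugacy class structure is known explicitly, so ${\rm dim}(J^2(Z(kN_G(P))))$ can be written down as a function of $q$ by bounding the products of radical class sums by hand; this is precisely the computation carried out by Bouc and Zimmermann.

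For the group side one must control the class-algebra structure constants of $PSU(3,q)$ modulo $p$ as $q$ varies. The plan is to start from the generic character table of $PSU(3,q)$ (equivalently from Deligne--Lusztig theory), extract the multiplication coefficients of the class sums lying in $J(Z(kG))$, and determine the dimension of the span of their pairwise products modulo $p$. One then has to show that this span always has dimension exactly one larger than the corresponding span for $N_G(P)$, isolating the single ``extra'' class sum that survives on the $G$ side but not in the normaliser.

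The hard part will be carrying out this last step uniformly. The numbers of semisimple and unipotent classes of $PSU(3,q)$ grow with $q$, the relevant structure constants are polynomials in $q$ whose reductions modulo $p$ behave irregularly, and the linear-independence questions that decide ${\rm dim}(J^2)$ do not obviously stabilise into a single closed formula. Pinning down \emph{why} exactly one dimension is gained for every prime power $q$ --- rather than verifying it case by case by machine as in Table \ref{tb:PSUblocks} --- is the genuine obstacle, and is the reason the results here are stated only for small defect groups.
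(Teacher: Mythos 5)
Your first paragraph is exactly the paper's (implicit) reasoning: the statement is presented as a formal consequence of the preceding Bouc--Zimmermann conjecture, since any algebra isomorphism preserves the dimensions of the radical layers, and the conjectured identity ${\rm dim}(J^2(Z(B_0)))=1+{\rm dim}(J^2(Z(b_0)))$ then rules out an isomorphism. The remainder of your proposal correctly identifies that the real open problem is the uniform dimension formula itself, which neither you nor the paper proves; within the scope of what the paper actually asserts, your argument matches it.
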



\section{Main Theorems}
\begin{remark}\label{rema:DerivedPerfectIso} Two blocks $B$ and $b$ being derived equivalent implies the  existence of a perfect isometry from ${\rm Irr}(B)$ to ${\rm Irr}(b)$ \cite[Theorem 3.1]{Broue1}, which in turn induces an algebra isomorphism between $Z(B)$ and $Z(b)$ \cite[Theorem 1.5]{Broue1}. Hence if $Z(B)\not \cong Z(b)$ then no such perfect isometry can exist. On the other hand, Brou\'{e}'s abelian defect group conjecture (ADGC) states that if $B$ has abelian defect groups then $B$ and its Brauer correspondent $b$ are derived equivalent, and hence there exists an isomorphism between the centres of the two blocks.
\end{remark}


\begin{theorem}\label{th:Star} Fix $p=3$. Let $G$ be a finite group and $B_0\in Bl(G)$ be the principal block with TI defect groups $D$ such that $|D| \leq 3^8 $; let $b_0\in Bl(N_G(D))$. Then there exists an isomorphism between $Z(B_0)$ and $Z(b_0)$ if and only if $D$ is abelian or $D\cong 3_{-}^{1+2}$.
\end{theorem}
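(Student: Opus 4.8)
The plan is to reduce the general statement to the finite list of explicit cases already analysed in Section~\ref{sec:cases}, and then read off the dimension of $J^2$ of the two centres from the calculations there. First I would apply the reductions from Section~2: by Remark~\ref{rema:Reduction} we may assume $O_{p'}(G)=1$, and since $D=P$ is a non-normal TI Sylow $3$-subgroup we have $O_p(G)=1$ as well. Lemma~\ref{lm:EatonReduction} then produces a unique non-abelian simple minimal normal subgroup $S$ with $G\hookrightarrow\mathrm{Aut}(S)$, and Lemma~\ref{lm:ClassificationTI} with $p=3$ restricts $S$ to the list ${}^2G_2(3^{2m+1})$, $PSL(3,4)$, ${}^2G_2(3)'$, $M_{11}$, $PSL(2,3^m)$, and $PSU(3,3^m)$. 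Lemma~\ref{lm:gcd1} tells us that (apart from the $S={}^2G_2(3)'$ exception) $G/S$ is a $3'$-group, so $P\leq S$ and the defect group of $B_0(G)$ is a Sylow $3$-subgroup of $S$.

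Next I would separate the list according to whether $D$ is abelian, isomorphic to $3_-^{1+2}$, or neither, and handle each regime by a different mechanism. For $PSL(2,3^m)$ the Sylow $3$-subgroups are elementary abelian, so by Broué's ADGC (cited in Remark~\ref{rema:DerivedPerfectIso}) the centres are isomorphic; since the conjecture is a theorem in the relevant cases this horn is settled. When $D\cong 3_-^{1+2}$ — which occurs precisely for the smallest extraspecial case, e.g.\ $M_{11}$, ${}^2G_2(3)'$ and $PSU(3,3)$ at the appropriate size — the conjectural derived equivalence of Section~2 forces $Z(B_0)\cong Z(b_0)$, giving the ``only if'' direction; here I would need to invoke the known derived equivalences for these small groups. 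The substantive content is the converse: for every remaining $(G,D)$ with $D$ non-abelian, $D\not\cong 3_-^{1+2}$, and $|D|\leq 3^8$, I must exhibit $Z(B_0)\not\cong Z(b_0)$.

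For that converse I would use the Loewy-layer invariant. The bound $|D|\leq 3^8$ is exactly what makes the list finite: the unitary groups $PSU(3,3^m)$ have $|P|=3^{3m}$, so $|D|\leq 3^8$ allows only $m=1,2$, i.e.\ $PSU(3,3)$ and $PSU(3,9)$ (and their automorphic extensions), while ${}^2G_2$ is handled by \cite{BroughS} via the Loewy \emph{length} discrepancy ($3$ versus $2$), and $PSL(3,4)$ by the tabulated data. For each such group, Theorems~\ref{th:PSU} and the $\mathrm{Aut}(McL)$-style computations give $\mathrm{dim}(J^2(Z(kGe_0)))=\mathrm{dim}(J^2(Z(kN_G(P))))+1$, and since $J^2$ of the centre is a Morita/isomorphism invariant, the centres cannot be isomorphic. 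I would collate these into a single table indexed by $(p,S)=(3,\cdot)$ and verify that the union of the cases covers every $S$ from Lemma~\ref{lm:ClassificationTI}(b),(e) with a non-abelian Sylow subgroup of order at most $3^8$.

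The main obstacle is completeness of the case analysis rather than any single hard computation: I must be certain that passing from $S$ to an arbitrary $3'$-overgroup $G\leq\mathrm{Aut}(S)$ does not change the isomorphism type of $J^2(Z(B_0))$ in a way that escapes the tabulated extensions, and that the excluded Ree exception $S={}^2G_2(3)'$ (where $G/S$ need not be a $3'$-group by Lemma~\ref{lm:gcd1}) is still accounted for. The delicate point is that the automorphism group can introduce new full-defect blocks or alter the centre's radical structure; I would argue, using the block-counting remark (Green's theorem plus \cite{BrauerNesbitt}), that the principal block remains the unique block of full defect in each listed extension, so the tabulated $J^2$-dimensions genuinely describe $B_0(G)$ for the full $G$ and not merely for $S$.
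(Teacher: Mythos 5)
Your overall skeleton matches the paper's proof: reduce via Remark~\ref{rema:Reduction}, Lemma~\ref{lm:EatonReduction}, Lemma~\ref{lm:ClassificationTI} and Lemma~\ref{lm:gcd1} to a finite list of simple groups $S$ with $S\leq G\leq{\rm Aut}(S)$, then dispose of the abelian cases by the verified instances of the ADGC and of the non-abelian cases by the $J^2$-dimension computations of Section~\ref{sec:cases}. However, there are two genuine errors in how you populate the cases. First, you misidentify which groups realise $D\cong 3_{-}^{1+2}$: you list $M_{11}$, $^2G_2(3)'$ and $PSU(3,3)$, but $M_{11}$ has abelian defect group $C_3\times C_3$, the group $^2G_2(3)'$ itself has cyclic Sylow $3$-subgroups (it is only $G\cong{\rm Aut}(^2G_2(3)')={^2G_2}(3)$ that has $D\cong 3_{-}^{1+2}$), and $PSU(3,3)$ has extraspecial defect group of exponent $3$, i.e.\ $3_{+}^{1+2}$, not $3_{-}^{1+2}$. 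This last confusion is not cosmetic: the paper proves $Z(B_0)\not\cong Z(b_0)$ for $PSU(3,3)$ via Theorem~\ref{th:PSU}, so if its defect group were $3_{-}^{1+2}$ the theorem as stated would be false. The exceptional isomorphism case $3_{-}^{1+2}$ arises from exactly one group on the list, namely $^2G_2(3)$. Relatedly, $PSL(3,4)$ at $p=3$ has abelian defect $C_3\times C_3$ and is settled by the Koshitani--Kunugi verification of the ADGC, not by any tabulated $J^2$ data.

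Second, for the $3_{-}^{1+2}$ case you propose to invoke ``the conjectural derived equivalence of Section~2'' or ``known derived equivalences for these small groups.'' No derived equivalence is known there --- the paper states explicitly that it is open whether $B_0$ and $b_0$ are derived equivalent for $^2G_2(3)$. The correct and weaker tool is the known \emph{perfect isometry} of Holloway--Koshitani--Kunugi (Example 4.3 of \cite{HollKoshKun}), which by Brou\'{e}'s Theorem 1.5 already induces an isomorphism $Z(B_0)\cong Z(b_0)$; appealing to the conjecture of Section~2 to prove the theorem would be circular. A smaller point of the same kind: for $S<G\leq{\rm Aut}(S)$ with $S=PSL(2,3^m)$ the paper again uses a perfect isometry (Fong), not the ADGC directly. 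Finally, note that the Ree groups $^2G_2(3^{2m+1})$, $m\geq 1$, need no Loewy-length argument inside this proof: they have $|D|=3^{3(2m+1)}\geq 3^9$ and are excluded by the hypothesis $|D|\leq 3^8$ --- indeed that bound exists precisely because the automorphism groups of the Ree groups (and $PSU(3,3^m)$ for $m>2$) are not yet settled.
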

\begin{proof} We apply Lemma \ref{lm:EatonReduction} and Remark \ref{rema:DerivedPerfectIso}, and  individually consider the cases given in Lemma \ref{lm:ClassificationTI}  which relate to $p=3$. The structure descriptions of $S$ and ${\rm Aut}(S)$  given below  follow from \cite{ConwayAtlasFiniteGroups}.

\underline{$S=$ $^2G_2(3^{2m+1})$}\newline
The smallest simple group is $S=$ $^2G_2(3^{3})$ and the defect group of the principal block has size $|D|=3^9$. Hence this case is  excluded in the statement. 
However, the reader should note that the result is true for $^2G_2(3^{2m+1})$ for all $m\geq1$ \cite{BroughS}. The question of what happens in the automorphism group remains an open question.

\underline{$S=PSL_3(4)$ }\newline
Suppose $S\leq G\leq {\rm Aut}(S)$. Then, by Lemma \ref{lm:gcd1}, $p\;\nmid\;[G:S]$ and $D\cong C_3\times C_3$. The ADGC holds in this case \cite{KoshitaniKunugi2002}.

\underline{$S=$ $^2G_2(3)'$} \newline
Note that $|Out(S)|=3$ so $G\cong S$ or $G\cong {\rm Aut}(S)$.
If $G\cong S$  then $D$ is cyclic and the ADGC holds in this case (\cite{Linck1}, \cite{Rick1}, \cite{Rouq1}).
If $G\cong {\rm Aut}(S)={^2G_2}(3)$, then  $D\cong 3_{-}^{1+2}$ and  there exists a perfect isometry between $B_0$ and $b_0$ \cite[Example 4.3]{HollKoshKun}; it is not known if the two blocks are derived equivalent.

\underline{$S= M _{11}$}\newline
Since $S$ has trivial outer automorphism group, $G\cong S$.  The principal $3$-block of $M_{11}$ has abelian defect group $D\cong C_3\times C_3$, and the ADGC has been verified in this case \cite{Okuyama1}.

\underline{$S=PSL(2,3^m)$}\newline
If $G\cong PSL_2(3^m)$ where $1\leq m\leq 5$, then $D \cong (C_3)^m$ is abelian and the ADGC has been verified \cite{Okuyama2}.
If $G$ is such that $S<G\leq {\rm Aut}(S)$, then by Lemma \ref{lm:gcd1}, gcd$(p,[G:S])=1$. Hence by \cite{FongIsometries}, there exists a perfect isometry between $B_0$ and $b_0$.

\underline{$S=PSU(3,3^m)$}\newline
If $S=PSU(3,3)$ or $PSU(3,9)$,  and
$S\leq G\leq {\rm Aut}(S)$, then  $D$ is not abelian and by Theorem  \ref{th:PSU}, $Z(B_0)\not\cong Z(b_0)$. If $S=PSU(3,3^m)$ for $m>2$, then $|D| >3^8$.
\end{proof}

\vspace{6mm}\noindent
We next consider the principal $5$-blocks. The group $PSU(3,25)$ has a principal block with defect group $D$ such that $|D|=5^6$. Hence we restrict to blocks with defect groups of smaller sizes.
\begin{theorem}\label{th:MainB} Fix $p=5$. Let $G$ be a finite group and $B_0\in Bl(G)$ be the principal block with TI defect groups $D$ such that $|D| \leq 5^5 $; let $b_0\in Bl(N_G(D))$. Then there exists an isomorphism between $Z(B_0)$ and $Z(b_0)$ if and only if $D$ is abelian or $D\cong 5_{-}^{1+2}$.
\end{theorem}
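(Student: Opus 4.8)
The plan is to mirror the structure of the proof of Theorem \ref{th:Star}: apply Lemma \ref{lm:EatonReduction} to reduce to the case where $G$ sits between a non-abelian simple group $S$ and its automorphism group, then run through the entries of Lemma \ref{lm:ClassificationTI} that correspond to the prime $p=5$, namely $S={^2B_2}(2^5)$, $S={^2F_4}(2)'$, $S=McL$, and the generic family $S=PSL(2,5^m)$, $PSU(3,5^m)$. For each $S$ the task splits into two halves governed by Remark \ref{rema:DerivedPerfectIso}: when $D$ is abelian I would invoke Brou\'{e}'s ADGC (established in the relevant references) or, for $p'$-extensions, the perfect-isometry results of Fong--Harris type to produce the centre isomorphism; when $D$ is non-abelian and not of type $5_{-}^{1+2}$ I would exhibit the failure of the isomorphism via the Loewy-layer dimension counts already collected in Section \ref{sec:cases}.

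The key steps, in order, are as follows. First, handle $S={^2B_2}(2^5)$: here $p=5$ arises because $5$ divides $|{^2B_2}(2^5)|$, the Sylow $5$-subgroup is cyclic, so $D$ is abelian and the block has cyclic defect, whence the ADGC is classical and the centres are isomorphic. Second, $S={^2F_4}(2)'$ and $S=McL$ should be treated together as sporadic-type cases; for $McL$ the Sylow $5$-subgroup is non-abelian (and not $5_{-}^{1+2}$), and Theorem \ref{th:DimMcL} together with Theorem \ref{th:DimAutMcL} already shows ${\rm dim}(J^2(Z(B_0)))\neq{\rm dim}(J^2(Z(b_0)))$, so $Z(B_0)\not\cong Z(b_0)$; I would verify the Sylow structure of ${^2F_4}(2)'$ and either cite a known defect-group description or note that its defect group exceeds the bound $|D|\le 5^5$. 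Third, for $S=PSL(2,5^m)$ the defect group is elementary abelian $(C_5)^m$, so abelian; when $G=S$ the ADGC holds, and for $S<G\le{\rm Aut}(S)$ Lemma \ref{lm:gcd1} gives ${\rm gcd}(p,[G:S])=1$, so a Fong-type perfect isometry supplies the isomorphism. Fourth, for $S=PSU(3,5^m)$ I would use that $D$ is non-abelian; the groups $PSU(3,5)$ and their listed automorphic extensions appear in Table \ref{tb:PSUblocks}, so Theorem \ref{th:PSU} gives ${\rm dim}(J^2(Z(B_0)))={\rm dim}(J^2(Z(b_0)))+1$ and hence $Z(B_0)\not\cong Z(b_0)$, while $PSU(3,5^m)$ for $m\ge2$ has $|D|=5^{3m}>5^5$ and so falls outside the stated bound.

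The main obstacle I expect is the same one flagged in the introduction for the general conjecture: controlling the automorphism-group extensions and, in particular, ensuring that every group $G$ with $S\le G\le{\rm Aut}(S)$ and $|D|\le 5^5$ is genuinely covered by an entry of Table \ref{tb:PSUblocks} rather than being silently omitted. For $PSU(3,5)$ one must check that the extensions $PSU(3,5){:}2$, $PSU(3,5){:}3$, and $PSU(3,5){:}S_3$ in the table exhaust the subgroups of ${\rm Aut}(PSU(3,5))$ that retain TI Sylow $5$-subgroups and that the defect group in each extension stays non-abelian and of size $\le 5^5$; this requires reading off $|{\rm Out}(PSU(3,5))|$ and confirming the defect-group type under each extension. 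The subtle point is that, unlike the $p=3$ case where $3_{-}^{1+2}$ genuinely occurs (in ${^2G_2}(3)$), here one must confirm that no group in range produces a $5_{-}^{1+2}$ defect group via the principal block other than those already excluded from the ``non-isomorphic'' conclusion, so that the stated biconditional is tight; verifying this exhaustiveness across all the automorphic extensions is where the care is concentrated.
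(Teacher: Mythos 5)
Your overall strategy matches the paper's: reduce via Lemma \ref{lm:EatonReduction}, run through the $p=5$ entries of Lemma \ref{lm:ClassificationTI}, and split each case according to whether $D$ is abelian (ADGC or perfect isometries) or not (the Loewy-layer counts from Section \ref{sec:cases}). But there is a genuine gap in your treatment of $S={^2B_2}(2^5)$. You dispose of this case by saying the Sylow $5$-subgroup is cyclic, hence $D$ is abelian and cyclic-defect theory applies. That covers only the subcase $G\cong S$. Since $|{\rm Out}({^2B_2}(2^5))|=5$, Lemma \ref{lm:EatonReduction} also allows $G\cong{\rm Aut}(S)={^2B_2}(2^5){:}5$, and Lemma \ref{lm:gcd1} explicitly lists $(5,{^2B_2}(2^5))$ as one of the two exceptions where $p$ may divide $[G:S]$. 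In that extension the defect group of the principal block is $5_{-}^{1+2}$ --- this is precisely where the exceptional case in the biconditional comes from --- and the ``if'' direction there is not covered by cyclic-defect theory: the paper invokes the perfect isometry of Holloway--Koshitani--Kunugi \cite[Example 4.4]{HollKoshKun}, via Remark \ref{rema:DerivedPerfectIso}, to conclude $Z(B_0)\cong Z(b_0)$. Without this step your argument neither shows that the $5_{-}^{1+2}$ exception actually occurs nor that the centres are isomorphic when it does, so the stated equivalence is not established. Your closing paragraph even suggests you suspect $5_{-}^{1+2}$ might not arise in range, which confirms the omission.

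A smaller defect: for $S={^2F_4}(2)'$ you hedge between ``cite a known defect-group description'' and ``note that its defect group exceeds the bound $|D|\le 5^5$.'' The second alternative is false --- the Sylow $5$-subgroup of ${^2F_4}(2)'$ is $C_5\times C_5$, well within the bound --- so the case cannot be discarded on size grounds; the paper resolves it by noting that $D$ is abelian and citing Robbins's verification of the ADGC for the Tits group. The remaining cases ($McL$, $PSL(2,5^m)$, $PSU(3,5^m)$) are handled essentially as in the paper.
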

\begin{proof} As in Theorem \ref{th:Star}, we apply Lemma \ref{lm:EatonReduction} and
Remark \ref{rema:DerivedPerfectIso}, and  individually consider the cases given in Lemma \ref{lm:ClassificationTI}  which relate to $p=5$. The structure descriptions of $S$ and ${\rm Aut}(S)$  given below follow  from \cite{ConwayAtlasFiniteGroups}.


\underline{$S=$ $^2B_2(2^5)$}\newline
Note that $|Out(S)|=5$ so $G\cong S$ or $G\cong {\rm Aut}(S)$.
If $G\cong S$  then $D$ is cyclic and the ADGC holds in this case (\cite{Linck1}, \cite{Rick1}, \cite{Rouq1}).
If $G\cong {\rm Aut}(S)$, then $D\cong 5_{-}^{1+2}$ and there exists a perfect isometry between $B_0$ and $b_0$ \cite[Example 4.4]{HollKoshKun}; it is not known if the two blocks are derived equivalent.

\underline{$S=$ $^2F_4(2)'$}\newline
Note that $|Out(S)|=2$ so $G\cong S={^2F_4}(2)'$ or $G\cong {\rm Aut}(S)={^2F_4}(2)$.
In either case, $D\cong C_5\times C_5$ is abelian and the ADGC has been verified by Robbins \cite{Robbins}.

\underline{$S=McL$}\newline
Let $S\leq G\leq {\rm Aut}(S)$. Then the defect groups of $B_0(G)$  are not abelian and by Theorem  \ref{th:DimMcL} and Theorem \ref{th:DimAutMcL}, $Z(B_0)\not\cong Z(b_0)$.

\underline{$S=PSL(2,5^m)$}\newline
If $G \cong PSL_2(5^m)$ where $1\leq m\leq 5$, then $D \cong (C_5)^m$ is abelian and the ADGC has been verified \cite{Okuyama2}.
If $G$ is such that $S<G\leq {\rm Aut}(S)$ then, by Lemma \ref{lm:gcd1}, gcd$(p,[G:S])=1$. Hence by \cite{FongIsometries}, there exists a perfect isometry between $B_0$ and $b_0$.

\underline{$S=PSU(3,5^m)$}\newline
If $PSU(3,5)=S\leq G\leq {\rm Aut}(S)\cong PSU(3,5):S_3$, then $D \cong (C_5 \rtimes C_5)\rtimes C_5$ is not abelian and by Theorem \ref{th:PSU}, $Z(B_0)\not\cong Z(b_0)$. If $S=PSU(3,5^m)$ for $m>1$, then $|D| >5^5$.
\end{proof}

\begin{remark} The two exceptions $3_{-}^{1+2}$ and $5_{-}^{1+2}$ arise from a weak conjecture of Brou\'{e} and Rouquier; this is discussed in  \cite[Conjectures 4.1]{HollKoshKun} and we restate it here to give a more complete picture of the context of our results. Let $B_0$ be the principal $p$-block of a finite group $G$ with a non-abelian Sylow $p$-subgroup $P$. Let $Q$ be the hyperfocal subgroup of $P$ in $G$, $Q=P\cap H$ where $H$ is the smallest normal subgroup of $G$ satisfying that $G/H$ is $p$-nilpotent. Rouquier conjectures that if $Q$ is abelian then the $p$-block $B_0$ and its Brauer correspondent $b_0$ in $N_G(Q)$ should be derived equivalent; the weaker version, as stated by Koshitani, Holloway and Kunugi, conjectures the existence of a perfect isometry in this case.

In most of our cases of non-abelian, trivial intersection defect groups, we have $H=G$ and so $Q=P\cap H=P$; therefore $Q$ is not abelian, and the weaker conjecture does not apply. The only exceptions are precisely the examples $G={\rm Aut}(^2G_2(3)')$ and $G={\rm Aut}(^2B_2(2^5))$, as discussed in Examples $4.3$ and $4.4$ of \cite{HollKoshKun}.
\end{remark}

\section*{Acknowledgments}
The work formed part of the author's PhD research, which was supported by EPSRC grant $1240275$. The author would further like to thank Prof. Gunter Malle for providing the opportunity to write up this research.

\section*{Appendix}\label{AppendixGAP}
We display here the GAP \cite{GAP}  code used in the computations. The author would like to thank Benjamin Sambale for kindly providing this short and efficient code. The author originally used a much more cumbersome code which can be found in \cite{SchwabrowPhD}.

\begin{verbatim}
CenterOfGroupAlgebra:=function(G,p)
local ct,dim,i,j,l,k,SCT;
if IsCharacterTable(G) then ct:=G; else ct:=CharacterTable(G); Irr(ct); fi;
dim:=NrConjugacyClasses(ct);
SCT:=EmptySCTable(dim, Zero(GF(p)), "symmetric");
for i in [1..dim] do
     for j in [i..dim] do
         l:=[];
         for k in [1..2*dim] do
             if k mod 2=1 then
l[k]:=ClassMultiplicationCoefficient(ct,i,j,(k+1)/2)*One(GF(p)); else
l[k]:=k/2; fi;
         od;
         SetEntrySCTable(SCT,i,j,l);
     od;
od;
return AlgebraByStructureConstants(GF(p),SCT);
end;
\end{verbatim}
The function defined can now be used to calculate the Loewy layer of the centre of the group algebra. For larger groups, it is more efficient to load $G$ directly as the corresponding character table, using the AtlasRep package of GAP and the command G:=CharacterTable("J4");.

\begin{verbatim}
#Define G and p
A:=CenterOfGroupAlgebra(G,p);            
J:=RadicalOfAlgebra(A);
JJ:=ProductSpace(J,J);
JJJ:=ProductSpace(J,JJ);
...
\end{verbatim}


\begin{thebibliography}{999}


\bibitem{Blau}
H. Blau and G. Michler, \emph{Modular representation theory of finite groups with T.I. Sylow p-subgroups}, Trans. Amer. Math. Soc \textbf{319} (1990), 417-468.


\bibitem{BoucZi}
S. Bouc and  A. Zimmermann, \emph{On a question of Rickard on tensor products of stably equivalent algebras}, to appear in: Experimental Mathematics, preprint (2015).


\bibitem{BrauerNesbitt}
R. Brauer and  C. Nesbitt, \emph{On the modular characters of groups}, Ann. of Math. (2) \textbf{42} (1941), 556-590.

\bibitem{Broue1} M. Brou\'{e}, \emph{Isom\'{e}tries parfaites, types de blocs, cat\'{e}gories d\'{e}riv\'{e}es}, Ast\'{e}risque \textbf{181-182} (1990), 61-92.


\bibitem{BroueEquivalence}
M. Brou\'{e}, \emph{Equivalences of blocks of group algebras}, In: Finite-dimensional algebras and related topics (Ottawa, 1992), (Eds. V.Dlab, L.L.Scott), Kluwer Acad. Publ., Dordrecht (1994), 1-26.

\bibitem{BroughS}
J. Brough and I. Schwabrow, \emph{On centres of $3$-blocks of the Ree groups $^2G_2(q)$}, preprint, arXiv:1607.02000 (2016).


\bibitem{Cliff}
G. Cliff, \emph{On centers of $2$-blocks of Suzuki groups}, J. Algebra \textbf{226} (2000), 74-90.

\bibitem{ConwayAtlasFiniteGroups} J.H. Conway, R.T. Curtis, S.P. Norton, R.A. Parker, and R.A. Wilson, \emph{Atlas of fnite groups}, Oxford University Press, Eynsham (1985).

\bibitem{EatonPlocalRank} C.W. Eaton, \emph{On finite groups of $p$-local rank one and conjectures of Dade and Robinson}, J. Algebra \textbf{238} (2001), 623-642.


\bibitem{FongCharacters} P. Fong, \emph{On the characters of $p$-solvable groups}, Trans.Amer.Math.Soc \textbf{98} (1961), 263-284.


\bibitem{FongIsometries} P. Fong, \emph{Isotypies and Shintani theory in $SL(2,q)$}, unpublished manuscript.

\bibitem{GAP} The GAP Group, \emph{GAP- Groups, Algorithms and Programming}, Version 4.7.7 (2015), http://www.gap-system.org.



\bibitem{Green68} J.A. Green, \emph{Some remarks on defect groups},   Math.Z. \textbf{107} (1968), 133-150.

\bibitem{HollKoshKun} M. Holloway, S. Koshitani and N. Kunugi, \emph{Blocks with nonabelian defect groups which have cyclic subgroups of index $p$}, Arch. Math. \textbf{94} (2010), 101-116.


\bibitem{KarpilovskyJacRadical}
G. Karpilovsky, \emph{The Jacobson radical of group algebras}, North-Holland Publishing (1987).

\bibitem{Konig} S. K\"{o}nig and A. Zimmermann, \emph{Derived equivalences for group rings}, Springer-Verlag, Berlin (1998).

\bibitem{KoshitaniKunugi2002} S. Koshitani and N. Kunugi, \emph{Brou\'{e}'s conjecture holds for principal $3$-blocks with elementary abelian defect grous of order $9$}, J. Algebra, \textbf{248} (2002), 575-604.

\bibitem{Kurzweil} H. Kurzweil and  B. Stellmacher, \emph{The theory of finite groups. An introduction.} Translated from the 1998 German original, Springer-Verlag, New York (2004). 


\bibitem{Linck1} M. Linckelmann, \emph{Derived equivalence for cyclic blocks over a $p$-adic ring}, Math. Z. \textbf{207} (1991),  293-304.


\bibitem{Okuyama1}
T. Okuyama, \emph{Some examples of derived equivalent blocks of finte groups}, preprint (1998).


\bibitem{Okuyama2}
T. Okuyama, \emph{Derived equivalences in SL(2,q)}, preprint (2000).


\bibitem{ReynoldsNormal} W.F. Reynolds, \emph{Blocks and normal subgroups of finite groups}, Nagoya Math. J. \textbf{22} (1963), 15-32.


\bibitem{Rick1} J. Rickard,\emph{ Derived categories and stable equivalence}, J. Pure Appl. Algebra \textbf{61} (1989), 303-317.




\bibitem{Robbins} D. Robbins, \emph{Brou\'{e}'s abelian defect group conjecture for the Tits group}, preprint, arXiv:0807.3105v1, (2008).


\bibitem{RobinsonLocalStruc} G.R. Robinson, \emph{Local structure, vertices and Alperin's conjecture}, Proc. London Math. Soc.(3) \textbf{72} (1996), 312-330.


\bibitem{Rouq1} R. Rouquier, \emph{The derived category of blocks with cyclic defect groups}, In: Derived equivalences for group rings,  Lecture Notes in Math. \textbf{1685}, Springer, Berlin (1998), 199-220.

\bibitem{SchwabrowPhD} I. Schwabrow, \emph{The centre of a block}, PhD Thesis, Manchester (2016).

\end{thebibliography}
\end{document}